\DeclareMathOperator{\im}{im}
\DeclareMathOperator{\rank}{rank}
\DeclareMathOperator{\Imaginary}{Im}
\DeclareMathOperator{\Real}{Re}
\newcommand{\oQ}{\overline{Q}}
\newcommand{\oBox}{\overline{\Box}}
\newcommand{\lp}{\langle}
\newcommand{\rp}{\rangle}
\newcommand{\lv}{\lvert}
\newcommand{\rv}{\rvert}
\newcommand{\lV}{\lVert}
\newcommand{\rV}{\rVert}
\newcommand{\mP}{\mathcal{P}}
\newcommand{\mS}{\mathcal{S}}
\newcommand{\bC}{\mathbb{C}}
\newcommand{\bN}{\mathbb{N}}
\def\sideremark#1{\ifvmode\leavevmode\fi\vadjust{\vbox to0pt{\vss
 \hbox to 0pt{\hskip\hsize\hskip1em
 \vbox{\hsize3cm\tiny\raggedright\pretolerance10000
 \noindent #1\hfill}\hss}\vbox to8pt{\vfil}\vss}}}
\newcommand{\comment}[1]{}
\newtheorem{thm}{Theorem}[section]
\newtheorem{prop}[thm]{Proposition}
\newtheorem{lem}[thm]{Lemma}
\newtheorem{cor}[thm]{Corollary}
\theoremstyle{definition}
\newtheorem{defn}[thm]{Definition}
\theoremstyle{remark}
\newtheorem{remark}[thm]{Remark}
\numberwithin{equation}{section}
\begin{document}

\title[The CR Paneitz Operator and Stability of CR Pluriharmonic Functions]{The CR Paneitz Operator and the Stability of CR Pluriharmonic Functions}
\author{Jeffrey S.\ Case}
\address{109 McAllister Building \\ Penn State University \\ University Park, PA 16801}
\email{jscase@psu.edu}
\author{Sagun Chanillo}
\thanks{SC was partially supported by NSF Grant No.\ DMS-1201474}
\address{Department of Mathematics \\ Rutgers University \\ 110 Frelinghuysen Rd., Piscataway, NJ 08854}
\email{chanillo@math.rutgers.edu}
\author{Paul Yang}
\thanks{PY was partially supported by NSF Grant No.\ DMS-1104536}
\address{Department of Mathematics \\ Princeton University \\ Princeton, NJ 08540}
\email{yang@math.princeton.edu}
% \date{}
% \keywords{}
% \subjclass[2000]{Primary ?; Secondary ?}
\begin{abstract}
We give a condition which ensures that the Paneitz operator of an embedded three-dimensional CR manifold is nonnegative and has kernel consisting only of the CR pluriharmonic functions.  Our condition requires uniform positivity of the Webster scalar curvature and the stability of the CR pluriharmonic functions for a real analytic deformation.  As an application, we show that the real ellipsoids in $\mathbb{C}^2$ are such that the CR Paneitz operator is nonnegative with kernel consisting only of the CR pluriharmonic functions.
\end{abstract}
\maketitle

%revised by Paul Jan13,2015
\section{Introduction}
\label{sec:intro}

In this paper, we study two related questions about the CR Paneitz operator in dimension
three, where this operator plays an important role in the embedding question.

Throughout this paper, we use the notation and terminology in \cite{Lee1988} unless
otherwise specified.
Let $(M^3,J,\theta)$ be a closed three-dimensional pseudohermitian manifold,
where $\theta$ is a contact form and $J$ is a CR structure compatible with the
contact bundle $\xi=\ker\theta$.
The CR structure $J$ decomposes $\xi\otimes\bC$ into the $+i$- and $-i$-eigenspaces of $J$, denoted $T_{1,0}$ and $T_{0,1}$, respectively.
The Levi form $\left\langle\  ,\ \right\rangle_{L_\theta}$ is the Hermitian form on
$T_{1,0}$ defined by
$\left\langle Z,W\right\rangle_{L_\theta}=
-i\left\langle d\theta,Z\wedge\overline{W}\right\rangle$.
We can extend $\left\langle\  ,\ \right\rangle_{L_\theta}$ to $T_{0,1}$ by defining
$\left\langle\overline{Z} ,\overline{W}\right\rangle_{L_\theta}=
\overline{\left\langle Z,W\right\rangle}_{L_\theta}$ for all $Z,W\in T_{1,0}$.
The Levi form induces naturally a Hermitian form on the dual bundle of $T_{1,0}$,
denoted by $\left\langle\  ,\ \right\rangle_{L_\theta^*}$,
and hence on all the induced tensor bundles.  By integrating the Hermitian form
(when acting on sections) over $M$ with respect to the volume form $dV=\theta\wedge d\theta$,
we get an inner product on the space of sections of each tensor bundle.
We denote this inner product by $\left\langle \ ,\ \right\rangle$. For example
\begin{equation}\label{21}
\left\langle\varphi ,\psi\right\rangle=\int_{M}\varphi\bar{\psi}\ dV,
\end{equation}
for functions $\varphi$ and $\psi$.

The Reeb vector field $T$ is the unique vector field such that $\theta(T)=1$ and $d\theta(T,\cdot)=0$.  Let $Z_1$ be a local frame of $T_{1,0}$ and consider the frame $\left\{T,Z_1,Z_{\bar 1}\right\}$ of $TM\otimes\bC$. Then $\left\{\theta,\theta^1,\theta^{\bar 1}\right\}$,
the coframe dual to $\left\{T,Z_1,Z_{\bar{1}}\right\}$, satisfies
\begin{equation}\label{22}
d\theta=ih_{1\bar 1}\theta^1\wedge\theta^{\bar 1}
\end{equation}
for some positive function $h_{1\bar 1}$. We can always choose $Z_1$
such that $h_{1\bar 1}=1$; hence, throughout this paper, we assume
$h_{1\bar 1}=1$

The pseudohermitian connection of $(J,\theta)$ is 
$\nabla$ on $TM\otimes\bC$ (and extended to tensors), and is given in terms of a local
frame $Z_1\in T_{1,0}$ by
\begin{equation*}
\nabla Z_1=\theta_1{}^1\otimes Z_1,\quad
\nabla Z_{\bar{1}}=\theta_{\bar{1}}{}^{\bar{1}}\otimes Z_{\bar{1}},\quad
\nabla T=0,
\end{equation*}
where $\theta_1{}^1$ is the $1$-form uniquely determined by the equations
\begin{align*}
d\theta^1&=\theta^1\wedge\theta_1{}^1+\theta\wedge\tau^1 , \\
\tau^1&\equiv 0\mod\theta^{\bar 1} , \\
0&=\theta_1{}^1+\theta_{\bar{1}}{}^{\bar 1} .
\end{align*}
$\theta_{1}{}^{1}$ and $\tau^1$ are called the connection form and the pseudohermitian
torsion, respectively.
Put $\tau^1=A^1{}_{\bar 1}\theta^{\bar 1}$.
The structure equation for the pseudohermitian connection is
\begin{equation*}
d\theta_1{}^1=R\theta^1\wedge\theta^{\bar 1}
+2i\Imaginary (A^{\bar{1}}{}_{1,\bar{1}}\theta^1\wedge\theta),
\end{equation*}
where $R$ is the (Tanaka-)Webster curvature.

We denote components of covariant derivatives with indices preceded by a comma;
thus we write $A^{\bar{1}}{}_{1,\bar{1}}\theta^1\wedge\theta$.
The indices $\{0, 1, \bar{1}\}$ indicate derivatives with respect to $\{T, Z_1, Z_{\bar{1}}\}$.
For derivatives of a scalar function, we omit the comma;
for example, given a smooth function $\varphi$, we write $\varphi_{1}=Z_1\varphi$ and $\varphi_{1\bar{1}}=
Z_{\bar{1}}Z_1\varphi-\theta_1^1(Z_{\bar{1}})Z_1\varphi$ and $\varphi_{0}=T\varphi$.

We recall several natural differential operators occurring in this paper.  For a smooth function $\varphi$, the Cauchy-Riemann operator $\partial_{b}$ can be
defined locally by
\[\partial_{b}\varphi=\varphi_{1}\theta^{1}.\]
We write $\bar{\partial}_{b}$ for the conjugate of
$\partial_{b}$. A function $\varphi$ is called CR holomorphic if
$\bar{\partial}_{b}\varphi=0$. The divergence operator $\delta_b$
takes $(1,0)$-forms to functions by
$\delta_b(\sigma_{1}\theta^{1})=\sigma_{1,}{}^{1}$; similarly,
$\bar{\delta}_b(\sigma_{\bar 1}\theta^{\bar 1})=\sigma_{\bar
1,}{}^{\bar 1}$.

The Kohn Laplacian on functions is
\[\Box_{b}=2\bar{\partial}_{b}^{*}\bar{\partial_{b}} . \]
The sublaplacian is $\Delta_b=\Real\Box_b$ and the CR conformal Laplacian is $L=-\Delta_b+R/4$.

Define
\begin{equation}
\label{eqn:P3}
P_3\varphi = \left(\varphi_{\bar1}{}^{\bar1}{}_1 + iA_{11}\varphi^1\right)\theta^1 .
\end{equation}
The importance of this operator is that the space $\mP$ of CR pluriharmonic functions can be characterized as $\mP=\ker P_3$; see~\cite{Lee1988}.

The CR Paneitz operator $P_4$, first introduced by Graham and Lee~\cite{GrahamLee1988}, is
\begin{equation}
\label{eqn:P4_defn}
P_4\varphi = \delta_b\left(P_3\varphi\right) .
\end{equation}
Define $Q$ by $Q\varphi = 2i\left(A^{11}\varphi_1\right)_{,1}$.  Using the commutation relation $[\Box_b,\overline{\Box}_b]=4i\Imaginary Q$, we see that
\begin{align*}
P_4\varphi&=\frac{1}{4}(\Box_{b}\overline{\Box}_{b}-2Q)\varphi\\
&=\frac{1}{8}\big((\overline{\Box}_{b}\Box_{b}+\Box_{b}\overline{\Box}_{b})\varphi+8\Imaginary (A^{11}\varphi_{1})_{1}\big).
\end{align*}
Hence $P_4$ is a real and symmetric operator. It plays an important role in the
embedding problem: whether the CR structure can be embedded into $\bC^n$ for some integer $n$. Note that the leading order term of $P_4$ makes it a fourth order hyperbolic operator, thus it is remarkable that it still displays properties of a subelliptic operator.  

\begin{defn}
A pseudohermitian manifold $(M^3,J,\theta)$ has nonnegative CR Paneitz operator, written $P_4\geq0$, if
\[ \left\lp P_4\varphi, \varphi\right\rp \geq 0 \]
for all smooth functions $\varphi$.
\end{defn}

 In previous work~\cite{ChanilloChiuYang2010} on the embedding problem, 
Chiu and the second two authors showed that three-dimensional CR manifolds with positive CR Yamabe constant and nonnegative CR Paneitz operator are embeddable in $\bC^n$.  As a partial converse, one would like to know if CR manifolds embedded in $\bC^2$ with some additional nice properties satisfy these nonnegativity conditions.  Working in this direction, Chiu and the second two authors showed~\cite{ChanilloChiuYang2013} that these nonnegative conditions hold for small deformations of a strictly pseudoconvex hypersurface with vanishing torsion in $\bC^2$.

Another question concerning the CR Paneitz operator is the identification of its kernel.  It follows
from its definition that, on a three-dimensional CR manifold, the space of CR pluriharmonic functions is contained in the kernel of the CR Paneitz operator.  Moreover, Graham and Lee showed~\cite{GrahamLee1988} that if a three-dimensional CR manifold admits a torsion-free contact form, then the kernel of the CR Paneitz operator consists solely of the CR pluriharmonic functions.  One would like to characterize CR manifolds for which this equality holds.  Since there are known non-embedded examples for which the equality does not hold~\cite{ChanilloChiuYang2010}, we restrict our attention to embedded CR manifolds.  Motivated by this problem, Hsiao showed~\cite{Hsiao2014} that for embedded CR manifolds, there is a finite-dimensional vector space $W$ such that the kernel of the CR Paneitz operator $P_4$ splits in the Folland--Stein space $S^{2,2}$ as a direct sum,
\begin{equation}
\label{eqn:supplementary}
\ker P_4 = \mP \oplus W .
\end{equation}
For an elementary proof, see Lemma~\ref{lem:2.8}.  Generic results about the existence of $W$ may be found in~\cite{CaseChanilloYang2015b}.

We need one more definition before we can state our main result:

\begin{defn}
The space of CR pluriharmonic functions is stable for the one-parameter family $(M^3,J^t,\theta)$ of pseudohermitian manifold if for every $\varphi\in\mP^t$ and every $\varepsilon>0$, there is a $\delta>0$ such that for each $s$ satisfying $\lv t-s\rv < \delta$, there is a CR pluriharmonic function $f_s\in\mP^s$ such that
\[ \left\lV \varphi - f_s\right\rV_2 < \varepsilon . \]
\end{defn}

\begin{thm}
\label{thm:main_thm}
Let $(M^3,J^t,\theta)$ be a family of embedded CR manifolds for $t\in[-1,1]$ with the following properties.
\begin{enumerate}
\item $J^t$ is real analytic in the deformation parameter $t$.
\item The Szeg{\H o} projectors $S^t\colon F^{2,0}\to(\ker \bar\partial_b^t\subset F^{2,0})$ vary continuously in the deformation parameter $t$ (see~Section~\ref{sec:estimates} for a definition of $F^{2,0}$).
\item For the structure $J^0$ we have $P_4^0\geq0$ and $\ker P_4^0=\mP^0$, the space of CR pluriharmonic functions with respect to $J^0$.
\item There is a uniform constant $c>0$ such that
\begin{equation}
\label{eqn:uniform_curv}
\inf_{t\in[-1,1]} \min_M R^t \geq c > 0 .
\end{equation}
\item The CR pluriharmonic functions are stable for the family $(M^3,J^t,\theta)$.
\end{enumerate}
Then $P_4^t\geq0$ and $\ker P_4^t=\mP^t$ for all $t\in[-1,1]$.
\end{thm}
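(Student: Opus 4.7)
The plan is a connectedness argument on $[-1,1]$. Define
\[ T = \{t \in [-1,1] : P_4^t \geq 0 \text{ and } \ker P_4^t = \mP^t\} . \]
By hypothesis~(3), $0 \in T$, so it suffices to show $T$ is both open and closed in the connected interval $[-1,1]$. In view of Lemma~\ref{lem:2.8}, the kernel condition is equivalent to $W^t = \{0\}$ for the finite-dimensional supplement in~\eqref{eqn:supplementary}, and I plan to work with this reformulation throughout.

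For openness at $t_0 \in T$, combining $P_4^{t_0} \geq 0$ with $W^{t_0} = \{0\}$ furnishes a spectral gap: the first positive eigenvalue $\mu_0$ of $P_4^{t_0}$ on $(\mP^{t_0})^\perp$ is strictly positive. The idea is to use hypothesis~(2) on continuity of the Szeg{\H o} projectors $S^t$ together with the stability hypothesis~(5) to build a norm-continuous family $\Pi^t$ of $L^2$-orthogonal projectors onto $\mP^t$, identifying the real parts of $\mathrm{ran}\, S^t$ with $\mP^t$. Conjugating $P_4^t$ by a partial isometry built from $\Pi^t$ recasts the problem as a continuous family of self-adjoint operators on the fixed Hilbert space $(\mP^{t_0})^\perp$. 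Hypothesis~(4) supplies the uniform Folland--Stein estimates needed to establish continuity in an operator topology strong enough for standard spectral perturbation theory to preserve the gap $\mu_0 > 0$ for $t$ near $t_0$, giving $W^t = \{0\}$ and $P_4^t \geq 0$ on a neighborhood.

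For closedness, let $t_n \to t^\ast$ with $t_n \in T$. Testing against a fixed $\varphi$ and passing to the limit yields $\langle P_4^{t^\ast}\varphi, \varphi\rangle = \lim_n \langle P_4^{t_n}\varphi, \varphi\rangle \geq 0$, so $P_4^{t^\ast} \geq 0$. Suppose toward contradiction that $\varphi \in W^{t^\ast}$ with $\|\varphi\|_2 = 1$. By hypothesis~(1), the conjugated family from the openness step is real analytic in $t$, so the Kato--Rellich theory of analytic perturbations of self-adjoint operators produces a real analytic branch of eigenvalues $\lambda(t)$ with $\lambda(t^\ast) = 0$ and real analytic eigenvectors $\varphi_t$ with $\varphi_{t^\ast} = \varphi$. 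If $\lambda \equiv 0$ on a neighborhood of $t^\ast$, then $\varphi_t \in \ker P_4^t$; since $W^{t_n} = \{0\}$ we must have $\varphi_{t_n} \in \mP^{t_n}$, and continuity of $\Pi^t$ together with stability~(5) pushes the $L^2$-limit $\varphi_{t^\ast}$ into $\mP^{t^\ast}$, contradicting $\varphi \perp \mP^{t^\ast}$ and $\varphi \neq 0$. If $\lambda \not\equiv 0$, real analyticity forces $\lambda(t_n) > 0$ with $\lambda(t_n) \to 0$; hypothesis~(4) provides uniform Folland--Stein bounds on the $\varphi_{t_n}$, and the compactness together with stability~(5) again forces the limit to be simultaneously orthogonal to and contained in $\mP^{t^\ast}$, contradicting $\varphi_{t^\ast} = \varphi \neq 0$.

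The principal obstacle is the closedness step: one needs hypothesis~(1) precisely to obtain a real analytic (rather than merely continuous) eigenvalue branch so that the dichotomy $\lambda \equiv 0$ versus $\lambda \not\equiv 0$ exhausts the possibilities, hypothesis~(2) to identify the moving subspaces $\mP^t$ across the family, hypothesis~(4) to render the passage to limits compact, and hypothesis~(5) to ensure that limits of CR pluriharmonic approximants remain CR pluriharmonic. All five assumptions enter essentially, and the delicate point is orchestrating them so that Kato--Rellich perturbation operates on a single fixed Hilbert space even though both $P_4^t$ and its infinite-dimensional kernel $\mP^t$ vary with~$t$.
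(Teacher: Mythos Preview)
Your connectedness skeleton matches the paper's, but the way you distribute the hypotheses between the two halves is inverted relative to what actually works, and this leads to a real gap in your closedness argument.

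In the paper, hypothesis~(1) (real analyticity) is used \emph{only} for openness, via \cite[Theorem~1.7]{ChanilloChiuYang2013}, to propagate $P_4^t\geq 0$ to a neighborhood of $t_0$; the kernel statement $W^t=\{0\}$ near $t_0$ is then obtained not by spectral perturbation but by a direct compactness/contradiction argument using Lemma~\ref{lem:2.1} (which needs~(4)) and stability~(5). Hypothesis~(2) (continuity of the Szeg{\H o} projectors on $F^{2,0}$) is used \emph{only} for closedness: via the Rumin-complex identification $W^t\cong \ker d''\cap\im P_3^t\subset F^{2,0}$ (Lemma~\ref{lem:2.23}), one has $\dim W^t=\rank(S^t\circ P_3^t)$, and rank of a continuous family of finite-rank operators is lower semi-continuous (Lemma~\ref{lem:2.26}). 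Since $\dim W^{t_n}=0$ along the sequence, lower semi-continuity immediately gives $\dim W^{t_0}=0$. No analyticity, no stability, and no eigenvalue branches enter the closedness step.

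Your closedness argument, by contrast, has two concrete problems. First, you assert that ``the conjugated family from the openness step is real analytic in $t$,'' but you built the conjugation from the Szeg{\H o} projectors $S^t$, for which hypothesis~(2) gives only continuity. So Kato--Rellich analytic perturbation theory is not available, and you cannot produce a real-analytic eigenvalue branch $\lambda(t)$. Second, even granting such a branch, the case $\lambda\not\equiv 0$ does not yield a contradiction: nothing prevents $\lambda(t)=c(t-t^\ast)^2+O((t-t^\ast)^3)$ with $c>0$, which is perfectly consistent with $\lambda(t_n)>0$ (hence $t_n\in T$) while $\lambda(t^\ast)=0$ (hence $\varphi\in W^{t^\ast}\setminus\{0\}$). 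Your appeal to stability~(5) here is misplaced, since the $\varphi_{t_n}$ in this case are \emph{not} CR pluriharmonic, so stability says nothing about their limit. The paper's rank argument sidesteps this entirely by never tracking individual eigenvalues.

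Your openness sketch is also considerably vaguer than the paper's: the claim that one can build a norm-continuous family of $L^2$-projectors onto the \emph{infinite-dimensional} spaces $\mP^t$ from the $F^{2,0}$-valued Szeg{\H o} projectors, and then invoke ``standard spectral perturbation theory'' to preserve a gap above an infinite-dimensional moving kernel, hides precisely the difficulty that hypothesis~(1) is designed to overcome.
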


\begin{remark}
The assumption~\eqref{eqn:uniform_curv} can be replaced by the assumption that the CR Yamabe constants $Y[J^t]$ are uniformly positive.  Since the assumptions on the CR Paneitz operator are CR invariant~\cite{Hirachi1990}, this allows us to recast Theorem~\ref{thm:main_thm} in a CR invariant way.
\end{remark}

As an application, consider the family of real ellipsoids in $\bC^2$ as deformations of the standard CR three-sphere.  The formula established in~\cite[Theorem~1]{KerzmanStein1978} expressing the Szeg{\H o} kernel in terms of the defining function implies that condition (2) holds.  Since the standard contact form on the CR three-sphere is torsion-free, its CR Paneitz operator is nonnegative and has kernel consisting only of the CR pluriharmonic functions~\cite{GrahamLee1988}. An elementary calculation~\cite{ChanilloChiuYang2013} shows that the real ellipsoids have positive Webster scalar curvature, and thus satisfy condition (4) of Theorem~\ref{thm:main_thm}. The condition (5) then follows from the stability results of~\cite{ChanilloChiuYang2010,Lempert1994}.

\begin{cor}
\label{cor:convex}
The real ellipsoids in $\bC^2$ are such that the Paneitz operator is 
nonnegative and has kernel 
consisting only of the CR pluriharmonic functions.
\end{cor}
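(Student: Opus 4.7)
The plan is to apply Theorem \ref{thm:main_thm} to a one-parameter family $(S^3, J^t, \theta)$ connecting the standard round sphere at $t=0$ to the target ellipsoid at $t=1$ (extended trivially to $t\in[-1,0]$), and to verify each of the five hypotheses in turn. Writing the target ellipsoid as the zero set of a real quadratic polynomial $\rho_1$ in $\bC^2$ and the unit sphere as the zero set of $\rho_0=|z|^2+|w|^2-1$, one sets $\rho_t=(1-t)\rho_0+t\rho_1$ and pulls back the CR structure induced on $\{\rho_t=0\}$ to the fixed underlying $S^3$ via a family of diffeomorphisms depending real analytically on the coefficients of $\rho_t$. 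This gives a family $J^t$ satisfying hypothesis~(1).

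I would then check the remaining four hypotheses by citing existing results. Hypothesis~(3) is the theorem of Graham and Lee \cite{GrahamLee1988}: the standard contact form on $S^3$ has vanishing pseudohermitian torsion, hence $P_4^0\geq 0$ and $\ker P_4^0=\mP^0$. Hypothesis~(4) follows from the explicit curvature calculation in \cite{ChanilloChiuYang2013}, which shows that every real ellipsoid has positive Webster scalar curvature; continuous dependence of $R^t$ on $t$ and compactness of $S^3\times[-1,1]$ then yield the required uniform lower bound. Hypothesis~(2), continuity of the Szeg\H{o} projectors $S^t$, is supplied by the Kerzman--Stein formula \cite[Theorem~1]{KerzmanStein1978}, which expresses the Szeg\H{o} kernel of a strictly pseudoconvex domain as a correction term built explicitly from the defining function, and therefore varies continuously with $\rho_t$. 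Hypothesis~(5), stability of the CR pluriharmonic functions, is precisely the content of the stability theorems in \cite{ChanilloChiuYang2010,Lempert1994} applied to the real analytic family $J^t$. With all five hypotheses verified, Theorem \ref{thm:main_thm} gives $P_4^t\geq 0$ and $\ker P_4^t=\mP^t$ for every $t\in[-1,1]$; specializing to $t=1$ delivers the corollary.

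The main obstacle I anticipate is not in any individual verification but in confirming that the two ``external'' hypotheses (2) and (5) are available \emph{uniformly} along the entire segment $\rho_t$, rather than only for small perturbations of the sphere. In particular, hypothesis~(5) requires the finite-dimensional supplementary space $W^t$ in the decomposition \eqref{eqn:supplementary} to depend continuously on $t$, which is the subtle analytic point handled by \cite{Lempert1994,ChanilloChiuYang2010}. Should those results only cover small deformations, the natural workaround is an iteration in finitely many short analytic steps: each time Theorem \ref{thm:main_thm} succeeds on a sub-interval, its conclusion at the new endpoint supplies a fresh instance of hypothesis~(3), and the process continues until the target ellipsoid is reached.
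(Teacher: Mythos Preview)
Your approach is the same as the paper's: verify hypotheses (1)--(5) of Theorem~\ref{thm:main_thm} for the family of real ellipsoids and apply the theorem. The citations you give match the paper's.

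There is one technical point you gloss over that the paper's proof actually spends most of its effort on. The Szeg{\H o} projector appearing in hypothesis~(2) is $S^t\colon F^{2,0}\to(\ker\bar\partial_b^t\subset F^{2,0})$, i.e.\ it acts on $(2,0)$-forms in the Rumin complex, whereas the Kerzman--Stein formula~\cite{KerzmanStein1978} concerns the Szeg{\H o} projection on \emph{functions}. The paper bridges this by using the global frame $Z_1$ available on a hypersurface in $\bC^2$ to identify a section $\omega=f\,\theta^1\wedge\theta\in F^{2,0}$ with the scalar $f$, checking via the structure equations that $S(f)\,\theta^1\wedge\theta$ is again $\bar\partial_b$-closed, and then invoking the explicit formula $S=\mathbf{H}(\mathrm{I}-\mathbf{A})^{-1}$ (with $\mathbf{H}$ the Henkin--Ramirez operator and $\mathbf{A}=\mathbf{H}^\ast-\mathbf{H}$) to deduce continuity in $t$ from continuity of the Henkin--Ramirez kernel. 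You should include this identification.

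A minor correction to your final paragraph: hypothesis~(5) is \emph{not} the statement that the supplementary space $W^t$ varies continuously; it is the approximation property for CR pluriharmonic functions stated just before Theorem~\ref{thm:main_thm}. The behavior of $W^t$ is handled internally by the proof of the theorem (Lemma~\ref{lem:2.15} and Corollary~\ref{cor:2.27}), not assumed as input. Your worry about iterating in short steps is unnecessary: the stability results of~\cite{Lempert1994,ChanilloChiuYang2010} apply to the full family of strictly pseudoconvex hypersurfaces, not only to small perturbations of the sphere.
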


%\section*{Acknowledgments}

\section{The proof of Theorem~\ref{thm:main_thm}}
\label{sec:estimates}

The proof of Theorem~\ref{thm:main_thm} is based on a continuity argument.  Let
\begin{equation}
\label{eqn:S}
\mS = \left\{ t\in[-1,1] \colon \text{$P_4^t\geq0$ and $\ker P_4^t=\mP^t$} \right\} .
\end{equation}
By hypothesis, $0\in\mS$.  Our goal is to show that $\mS$ is open and closed, whence $S=[-1,1]$.  To do so requires a number of new estimates for deformations of CR structures.

Many of our estimates do not require the assumptions of Theorem~\ref{thm:main_thm}.  In particular, the assumption that $J^t$ is real analytic in $t$ is only used to prove that $\mS$ is open, while the continuity of the Szeg{\H o} projectors is only used to prove that $\mS$ is closed.  With the expectation that our estimates will be useful in other contexts, we isolate them below with the minimal required hypotheses.

We begin by developing the framework from which we will show that the set $\mS$ is open.  The first ingredient we need is the following subelliptic estimate for $P_4$ for functions in $\mP^\perp$ which is uniform in
\[ \lambda_1\left(\Box_b\right) := \inf\left\{ \int\lv\bar\partial_b u\rv^2 \colon \int u^2=1 \text{ and } u\in S^{1,2}\cap\left(\ker\bar\partial_b\right)^\perp \right\} . \]
Note that $\lambda_1\left(\Box_b\right)$ need not be positive for an arbitrary closed pseudohermitian manifold, though it is positive when the manifold is also embeddable~\cite{Kohn1986}.  Our estimate improves a result of Saotome and Chang~\cite{ChangSaotome2011}.

\begin{lem}
\label{lem:2.1}
Assume that $(M^3,J^t,\theta)$ is a family of pseudohermitian manifolds such that for all $t$ it holds that
\begin{equation}
\label{eqn:lem:2.1_assumption}
\lambda_1\left(\Box_b^t\right)\geq c > 0 .
\end{equation}
Then for $f\in S_t^{4,2}\cap\mP^\perp$, there exists a constant $c_1>0$, independent of $t$, such that
\begin{equation}
\label{eqn:lem:2.1}
c_1 \lV f\rV_{S^{4,2}} \leq \lV P_4^t f\rV_2 + \lV f\rV_2
\end{equation}
for $S^{4,2}$ the Folland--Stein space defined in~\cite{FollandStein1974}.
\end{lem}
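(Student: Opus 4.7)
The plan is to combine the factored form of $P_4$ with Folland--Stein subelliptic estimates for the Kohn Laplacian, then use the uniform spectral gap hypothesis~\eqref{eqn:lem:2.1_assumption} together with the restriction to $\mP^\perp$ to make every constant independent of $t$.

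The first move is to invoke the identity $4P_4 f = \Box_b\overline{\Box}_b f - 2Qf$ recalled in the introduction, where $Q$ is a second-order operator with smooth coefficients built from the torsion and its covariant derivatives. This converts the problem into an estimate for $\lV\Box_b^t\overline{\Box}_b^t f\rV_2$, modulo an error $\lV Qf\rV_2\leq C\lV f\rV_{S^{2,2}}$. I then apply the standard subelliptic estimate for $\Box_b^t$ to the function $\overline{\Box}_b^t f$, and next the subelliptic estimate for $\overline{\Box}_b^t$ to $f$ itself, to obtain
\[ \lV f\rV_{S^{4,2}}\leq C\bigl(\lV\Box_b^t\overline{\Box}_b^t f\rV_2+\lV\overline{\Box}_b^t f\rV_2+\lV f\rV_{S^{2,2}}\bigr)\leq C\bigl(\lV P_4^t f\rV_2+\lV f\rV_{S^{2,2}}\bigr). \]
The assumption $f\in\mP^\perp$ plays two distinct roles here. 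First, since $\mP$ contains both $\ker\Box_b^t$ (CR holomorphic functions) and $\ker\overline{\Box}_b^t$ (their conjugates), one has $f\perp\ker\Box_b^t$ and $f\perp\ker\overline{\Box}_b^t$; the adjoint identity $\lp\overline{\Box}_b^t f,v\rp=\lp f,\Box_b^t v\rp$ further yields $\overline{\Box}_b^t f\perp\ker\Box_b^t$. This allows me to invoke the sharp (kernel-free) forms of the subelliptic estimates. Second, together with~\eqref{eqn:lem:2.1_assumption}, it produces the uniform Poincar\'e-type inequality $\lV f\rV_2\leq c^{-1/2}\lV\bar\partial_b^t f\rV_2$, which combined with a Folland--Stein interpolation inequality $\lV f\rV_{S^{2,2}}\leq\varepsilon\lV f\rV_{S^{4,2}}+C_\varepsilon\lV f\rV_2$ lets me absorb the residual $\lV f\rV_{S^{2,2}}$ term into the left-hand side, producing~\eqref{eqn:lem:2.1}.

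The principal obstacle is tracking uniformity in $t$ throughout this chain. The Folland--Stein estimates and the interpolation inequality have constants determined by the horizontal frame $\{Z_1^t,Z_{\bar 1}^t\}$ and the bracket relations realizing H\"ormander's condition; since $J^t$ varies continuously on the compact interval $[-1,1]$, these can be chosen independently of $t$. The delicate step is the coercivity inequality used to upgrade $\lV f\rV_{S^{2,2}}+\lV f\rV_2$ to just $\lV f\rV_2$, whose constant a priori depends on $\lambda_1(\Box_b^t)^{-1}$; this is precisely what~\eqref{eqn:lem:2.1_assumption} is designed to control. I expect the substance of the proof to lie in this bookkeeping rather than in any new analytic input, and to furnish the improvement over the version of the estimate in~\cite{ChangSaotome2011}.
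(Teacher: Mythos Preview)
Your argument has a genuine gap at the step where you claim $\overline{\Box}_b^t f\perp\ker\Box_b^t$. The identity $\langle\overline{\Box}_b^t f,v\rangle=\langle f,\Box_b^t v\rangle$ that you invoke is false: both $\Box_b^t$ and $\overline{\Box}_b^t$ are self-adjoint, but they are not adjoints of one another. The correct identity is $\langle\overline{\Box}_b^t f,v\rangle=\langle f,\overline{\Box}_b^t v\rangle$, and for $v\in\ker\Box_b^t=\ker\bar\partial_b^t$ there is no reason for $\overline{\Box}_b^t v=2\partial_b^{t\ast}\partial_b^t v$ to vanish. Consequently you cannot apply the sharp subelliptic estimate for $\Box_b^t$ to $\overline{\Box}_b^t f$ with a constant governed by $\lambda_1(\Box_b^t)$, and the chain of inequalities leading to your displayed estimate breaks down.

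This obstruction is exactly what the paper's proof is built to overcome. The paper applies $\Box_b^t$ first (using $f\perp\ker\bar\partial_b^t$) to get $\lVert f\rVert_{S^{4,2}}\leq C(\lVert\Box_b^t f\rVert_{S^{2,2}}+\lVert f\rVert_2)$, and then faces the analogous problem that $\Box_b^t f$ need not be orthogonal to the anti-CR functions. To fix this it solves $\overline{\Box}_b^t\psi=f$ and computes, using $4P_4^t=\overline{\Box}_b^t\Box_b^t-2\overline{Q}^t$ and the fact that anti-CR functions lie in $\ker P_4^t$, that $\Box_b^t f-2Q^t\psi$ \emph{is} orthogonal to $\ker\partial_b^t$. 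Only after this correction can the second subelliptic estimate be applied; the extra $Q^t\psi$ terms are then controlled by a further application of the spectral-gap estimate to $\psi$. The interpolation step you describe at the end is correct and matches the paper, but the missing orthogonality correction is the substance of the argument.
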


\begin{proof}

Since $f\perp\mP^t$, it also holds that $f\perp\ker\partial_b^t$; i.e.\ $f$ is perpendicular to the anti-CR functions.  We can then find a solution $\psi$ to the equation $\overline{\Box}_b^t\psi=f$.  Let $h$ be an anti-CR function.  Then
\begin{align*}
\left\lp \Box_b^t f, h\right\rp & = \left\lp \Box_b^t \overline{\Box}_b^t\psi, h\right\rp \\
& = \left\lp \psi, \left(\oBox_b^t\Box_b^t - 2\oQ^t\right)h \right\rp + 2\left\lp \psi, \oQ^th \right\rp
\end{align*}
Recall that $4P_4^t = \overline{\Box}_b^t\Box_b^t - 2\oQ^t$.  Since $h$ is an anti-CR function, $h\in\ker P_4^t$.  Thus
\[ \left\lp \Box_b^t f - 2Q^t\psi, h \right\rp = 0 \]
for every $h\in\ker\partial_b^t$.

Next, since $f\perp\mP^t$, it also holds that $f\perp\ker\bar\partial_b^t$.  From the assumption~\eqref{eqn:lem:2.1_assumption}, it follows that
\[ c_1\lV f\rV_{S^{4,2}} \leq \lV\Box_b^tf\rV_{S^{2,2}} + \lV f\rV_2 \]
for a constant $c_1>0$ independent of $t$ (cf.\ \cite{ChenShaw2001}).  Therefore
\begin{equation}
\label{eqn:2.2}
c_1\lV f\rV_{S^{4,2}(M)} \leq \lV\Box_b^tf - 2Q^t\psi\rV_{S^{2,2}} + \lV Q^t\psi\rV_{S^{2,2}} + \lV f\rV_2 .
\end{equation}
Since $\Box_b^tf - 2Q^t\psi$ is orthogonal to the anti-CR functions and since the assumption~\eqref{eqn:lem:2.1_assumption} yields the same uniform lower bound on $\lambda_1\bigl(\oBox_b^t\bigr)$, it follows that
\begin{equation}
\label{eqn:2.3}
c_2\left\lV \Box_b^tf - 2Q^t\psi\right\rV_{S^{2,2}} \leq \left\lV\oBox_b^t\left(\Box_b^tf-2Q^t\psi\right)\right\rV_2 + \left\lV\Box_b^tf - 2Q^t\psi\right\rV_2
\end{equation}
for a constant $c_2>0$ independent of $t$.  Combining~\eqref{eqn:2.2} and~\eqref{eqn:2.3} we find a constant $c_3>0$, independent of $t$, such that
\begin{equation}
\label{eqn:2.4}
c_3\left\lV f\right\rV_{S^{4,2}} \leq \left\lV P_4^tf\right\rV_2 + \left\lV \oQ^t f\right\rV_2 + \left\lV Q^t\psi\right\rV_{S^{2,2}} + \left\lV f\right\rV_{S^{2,2}} + \left\lV f\right\rV_2 .
\end{equation}
Since $Q^t$ is a second-order operator, we have that
\[ \left\lV Q^t\phi \right\rV_2 \leq \left\lV \phi\right\rV_{S^{2,2}} + \left\lV\phi\right\rV_2 \]
for any function $\phi$.  On the other hand, the definition of $\psi$ gives
\[ c\left\lV\psi\right\rV_{S^{4,2}} \leq \left\lV f\right\rV_{S^{2,2}} + \left\lV f\right\rV_2 \]
for a constant $c>0$ independent of $t$.  Inserting these two estimates into~\eqref{eqn:2.4} yields a constant $c_4>0$, independent of $t$, such that
\begin{equation}
\label{eqn:2.6}
c_4\left\lV f\right\rV_{S^{4,2}} \leq \left\lV P_4^tf\right\rV_2 + \left\lV f\right\rV_{S^{2,2}} + \left\lV f\right\rV_2 .
\end{equation}

Next, an interpolation inequality in~\cite{Lu1997,Lu2000} shows that for any $\varepsilon>0$, there is a constant $C=C(\varepsilon)>0$ such that
\begin{equation}
\label{eqn:2.7}
\left\lV f\right\rV_{S^{2,2}} \leq \varepsilon\left\lV f\right\rV_{S^{4,2}} + C\left\lV f\right\rV_2 .
\end{equation}
Combining~\eqref{eqn:2.6} and~\eqref{eqn:2.7} with $\varepsilon$ sufficiently small yields the conclusion.
\end{proof}

From Lemma~\ref{lem:2.1} we recover the decomposition~\eqref{eqn:supplementary} of the kernel of the CR Paneitz operator.

\begin{lem}
\label{lem:2.8}
Let $(M^3,J,\theta)$ be an embedded CR structure.  Then
\[ \ker P_4 = \mP \oplus W \]
in $S^{2,2}$, where $\dim W<\infty$.
\end{lem}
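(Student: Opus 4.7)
The plan is to define $W$ as the $L^2$-orthogonal complement of $\mP$ inside $\ker P_4$ and then to show $W$ is finite-dimensional by combining the subelliptic estimate of Lemma~\ref{lem:2.1} with the Rellich-type compactness of the Folland--Stein embedding $S^{4,2}\hookrightarrow L^2$.

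\textbf{Step 1: Setting up the decomposition.} First I would verify that $\mP$ is closed in $S^{2,2}$ (and in $L^2$). This follows from the characterization $\mP=\ker P_3$ of Lee, since $P_3$ is a continuous linear operator between appropriate Folland--Stein spaces. Because $\mP\subset\ker P_4$ (as $P_4=\delta_b\circ P_3$), setting
\[ W := \ker P_4 \cap \mP^{\perp}, \]
with the orthogonal complement taken in $L^2$, yields an orthogonal decomposition $\ker P_4=\mP\oplus W$ in $S^{2,2}$.

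\textbf{Step 2: Applying the subelliptic estimate.} Since $(M^3,J,\theta)$ is embedded, Kohn's closed range theorem guarantees $\lambda_1(\Box_b)>0$, so the hypothesis~\eqref{eqn:lem:2.1_assumption} of Lemma~\ref{lem:2.1} holds. Applied to any $f\in W$, for which $P_4 f=0$, the estimate~\eqref{eqn:lem:2.1} collapses to
\[ c_1\lV f\rV_{S^{4,2}} \leq \lV f\rV_2 . \]
In particular, the closed unit ball of $W$ in $L^2$ is bounded in $S^{4,2}$.

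\textbf{Step 3: Compactness and finite-dimensionality.} Since $M$ is compact, the Folland--Stein embedding $S^{4,2}\hookrightarrow L^2$ is compact (this is the analogue of the Rellich--Kondrachov theorem for the sub-Riemannian setting, established in~\cite{FollandStein1974}). Consequently the closed unit ball of $W$ in $L^2$ is relatively compact in $L^2$. By the Riesz lemma, any normed space whose closed unit ball is compact must be finite-dimensional, so $\dim W<\infty$.

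\textbf{Main obstacle.} There is no deep obstacle here; the lemma is a clean combination of Lemma~\ref{lem:2.1} with Rellich compactness. The only mild care required is Step~1, namely checking that $\mP$ is $L^2$-closed so that the orthogonal decomposition is legitimate, and verifying that embeddedness is what supplies the positivity of $\lambda_1(\Box_b)$ needed to invoke Lemma~\ref{lem:2.1}.
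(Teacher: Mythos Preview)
Your proposal is correct and follows essentially the same route as the paper: define $W=\ker P_4\cap\mP^\perp$, use embeddedness via Kohn to get $\lambda_1(\Box_b)>0$, apply Lemma~\ref{lem:2.1} to bound the $S^{4,2}$-norm by the $L^2$-norm on $W$, and then conclude finite-dimensionality from Rellich compactness. The paper's proof is slightly terser (it does not pause to verify that $\mP$ is closed or to name the Riesz lemma), but the argument is the same.
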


\begin{proof}

Consider the constant family $(M^3,J^t,\theta)$ of pseudohermitian manifolds with $J^t=J$ for all $t$.  Since $(M^3,J,\theta)$ is embedded, the assumption~\eqref{eqn:lem:2.1_assumption} holds~\cite{Kohn1986}.  Let $W$ consist of all elements $f\in\ker P_4$ such that $f\perp\mP$.  From Lemma~\ref{lem:2.1} and a simple density argument we see that, for any $f\in W$,
\[ \lV f\rV_{S^{4,2}} \leq c\lV f\rV_2 . \]
Thus the unit $L^2$-ball in $W$ satisfies $\lV f\rV_{S^{4,2}}\leq c$.  Hence, by the Rellich lemma, the unit $L^2$-ball in $W$ is compact, whence $W$ is finite-dimensional.
\end{proof}

\begin{defn}
Let $(M^3,J,\theta)$ be an embedded CR structure.  The \emph{supplementary space} $W$ is the subspace from Lemma~\ref{lem:2.8}.
\end{defn}

We now show that $\mS$ is open.

\begin{prop}
\label{prop:2.9}
Let $(M^3,J^t,\theta)$ be a family of embedded CR manifolds satisfying hypotheses (1), (3), (4), and (5) of Theorem~\ref{thm:main_thm} and define $\mS$ by~\eqref{eqn:S}.  Then $\mS$ is open.
\end{prop}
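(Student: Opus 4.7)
The plan is a compactness--limit argument. Fix $t_0 \in \mS$; I want to show that every sequence $t_n \to t_0$ eventually lies in $\mS$, so I argue by contradiction and assume $t_n \to t_0$ with $t_n \notin \mS$ for each $n$.

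The first step is to establish a spectral gap for $P_4^{t_0}$ on $(\mP^{t_0})^\perp$. Since each $(M^3,J^t,\theta)$ is embedded, $\lambda_1(\Box_b^t) > 0$; combining the real-analyticity of $J^t$ in $t$ (hypothesis~(1)) with the compactness of a closed neighborhood of $t_0$ gives a uniform bound $\lambda_1(\Box_b^t) \geq c > 0$ nearby, so Lemma~\ref{lem:2.1} applies uniformly there. The resulting subelliptic estimate together with the Rellich lemma shows that $P_4^{t_0}$ restricted to $(\mP^{t_0})^\perp$ has compact resolvent, hence discrete spectrum; since hypothesis~(3) makes this spectrum strictly positive, there exists $\lambda_0 > 0$ with
\[
\left\langle P_4^{t_0} f, f\right\rangle \geq \lambda_0 \lV f\rV_2^2, \qquad f \in S^{4,2} \cap (\mP^{t_0})^\perp.
\]

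Since $t_n \notin \mS$, either $P_4^{t_n}$ has a negative eigenvalue on $(\mP^{t_n})^\perp$ or its kernel there is nontrivial. To make sense of this uniformly I derive a G\aa rding-type bound $P_4^{t_n} \geq -C$ from the structural formula $8P_4^t\varphi = (\overline{\Box}_b^t\Box_b^t + \Box_b^t\overline{\Box}_b^t)\varphi + 8\Imaginary((A^t)^{11}\varphi_1)_1$, in which the first two terms contribute nonnegatively and the torsion term is second order, so is absorbed into the $S^{4,2}$-norm via the interpolation~\eqref{eqn:2.7}. Together with Lemma~\ref{lem:2.1} this gives $P_4^{t_n}|_{(\mP^{t_n})^\perp}$ compact resolvent with discrete spectrum bounded below by $-C$, uniformly in $n$. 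Extract an eigenfunction $f_n$ with $P_4^{t_n}f_n = \mu_n f_n$, $\lV f_n\rV_2 = 1$, $f_n \perp \mP^{t_n}$, and $-C \leq \mu_n \leq 0$. Lemma~\ref{lem:2.1} bounds $\lV f_n\rV_{S^{4,2}}$ uniformly, so by Rellich a subsequence satisfies $f_n \to f_\infty$ in $L^2$ and $f_n \rightharpoonup f_\infty$ weakly in $S^{4,2}$, with $\lV f_\infty\rV_2 = 1$ and $\mu_n \to \mu \leq 0$. Coefficient convergence $P_4^{t_n} \to P_4^{t_0}$ (hypothesis~(1)) combined with weak $S^{4,2}$-convergence gives $P_4^{t_n}f_n \rightharpoonup P_4^{t_0}f_\infty$ weakly in $L^2$; matched against $P_4^{t_n} f_n = \mu_n f_n \to \mu f_\infty$ in $L^2$, this yields $P_4^{t_0} f_\infty = \mu f_\infty$. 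Stability (hypothesis~(5)) approximates each $\varphi \in \mP^{t_0}$ by $\varphi_n \in \mP^{t_n}$ in $L^2$, giving $\langle f_\infty,\varphi\rangle = \lim_n \langle f_n,\varphi_n\rangle = 0$ and hence $f_\infty \perp \mP^{t_0}$. Thus $f_\infty \in (\mP^{t_0})^\perp$ is a nonzero eigenfunction of $P_4^{t_0}$ with eigenvalue $\mu \leq 0$, contradicting the spectral gap.

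The main obstacle I expect is the uniform lower bound $P_4^{t_n} \geq -C$, which is what prevents $|\mu_n|$ from escaping to infinity and lets us extract a nontrivial weak limit via Rellich. Producing it requires combining the structural formula for $P_4$, a G\aa rding estimate for the second-order torsion term, and interpolation as in~\eqref{eqn:2.7}, while avoiding circularity between the two applications of Lemma~\ref{lem:2.1}. Stability (hypothesis~(5)) plays a distinct and equally essential role: without it, the limit $f_\infty$ could lie in $\mP^{t_0}$ and the contradiction with the spectral gap would collapse.
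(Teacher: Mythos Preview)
Your compactness--stability endgame is close to the paper's, but the argument has a genuine gap at the step where you claim a uniform lower bound $\lambda_1(\Box_b^t)\geq c>0$ near $t_0$. You assert this follows from embeddability of each $J^t$ together with real-analyticity in $t$ and compactness of a neighborhood. Embeddability gives $\lambda_1(\Box_b^t)>0$ for each fixed $t$ (Kohn), but you have provided no argument that $t\mapsto\lambda_1(\Box_b^t)$ is lower semicontinuous, and in fact this is exactly the delicate point: the orthogonal complement $(\ker\bar\partial_b^t)^\perp$ moves with $t$, and without control on the Szeg{\H o} projectors (hypothesis~(2), which you are not allowed to use here) there is no obvious way to pass minimizers to the limit. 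Note also that you never invoke hypothesis~(4); since the statement explicitly requires it, this is a strong signal that something is missing.

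The paper proceeds in a different order that avoids this difficulty. First, it quotes \cite[Theorem~1.7]{ChanilloChiuYang2013} to conclude directly that $P_4^t\geq0$ for $\lvert t-t_0\rvert<\delta$; this external theorem is precisely where the real-analyticity hypothesis~(1) and the stability hypothesis~(5) are used, as the paper's Remark makes explicit. Once $P_4^t\geq0$ is known, the main result of~\cite{ChanilloChiuYang2010} (which requires $P_4\geq0$ and $R>0$) combined with hypothesis~(4) yields the uniform bound $\lambda_1(\Box_b^t)\geq c$. Only then does Lemma~\ref{lem:2.1} apply uniformly, and the remaining compactness argument is the simpler one: assume $W^{t_k}\neq\{0\}$, extract $f_{t_k}\to f_0$ with $P_4^{t_0}f_0=0$, conclude $f_0\in\mP^{t_0}$ since $t_0\in\mS$, and use stability to approximate $f_0$ by $\psi_{t_k}\in\mP^{t_k}$, reaching a contradiction from $\langle\psi_{t_k},f_{t_k}\rangle=0$. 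Your attempt to treat the two failure modes ($P_4^{t_n}\not\geq0$ and $W^{t_n}\neq0$) simultaneously via a G{\aa}rding bound and eigenvalue extraction is more ambitious, but it cannot get off the ground without the uniform $\lambda_1$ bound, and obtaining that bound in the paper's framework already requires $P_4^t\geq0$---so the nonnegativity must be handled first and separately.
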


\begin{proof}

Let $t_0\in\mS$.  Thus $P_4^{t_0}\geq0$ and $\ker P_4^{t_0}=\mP^{t_0}$.  Since the CR pluriharmonic functions are stable and the CR structures $J^t$ are real analytic in $t$, we may apply~\cite[Theorem~1.7]{ChanilloChiuYang2013} to conclude that there is a constant $\delta>0$ such that for any $t$ with $\lv t-t_0\rv<\delta$, it holds that
\begin{equation}
\label{eqn:2.10}
P_4^t \geq 0 .
\end{equation}
Note that the assumption in \cite[Theorem~1.7]{ChanilloChiuYang2013} that the deformation consists of manifolds embedded in $\bC^2$ was only used to invoke Lempert's stability theorem~\cite{Lempert1994} for CR functions.  Thus the conclusion of \cite[Theorem~1.7]{ChanilloChiuYang2013} holds by replacing the assumption that the manifolds are embedded in $\bC^2$ by the assumption, as in Theorem~\ref{thm:main_thm}, that the CR pluriharmonic functions are stable.

To complete the proof, we must show that for $\lv t-t_0\rv<\delta$, it holds that $\ker P_4^t = \mP^t$.  Since our structures are embedded, Lemma~\ref{lem:2.8} implies that the supplementary space $W^t$ is finite-dimensional.  By~\eqref{eqn:2.10} and the assumption that $R^t$ is uniformly bounded below by a positive constant, the main result of~\cite{ChanilloChiuYang2010} implies that there is a constant $c>0$, independent of $t$, such that
\[ \lambda_1\left(\Box_b^t\right)\geq c . \]
Thus the assumptions of Lemma~\ref{lem:2.1} hold.

Suppose to the contrary that it is not true that $W^t=\{0\}$ for all $\lv t-t_0\rv<\delta$.  Let $f_{t_k}\in W^{t_k}$ be such that $\lV f_{t_k}\rV_2=1$ and $t_k\to t_0$.  Since $f_{t_k}\perp\mP^{t_k}$, Lemma~\ref{lem:2.1} implies that $\lV f_{t_k}\rV_{S^{4,2}} \leq c$.  Thus, by the Rellich lemma, there is a function $f_0\in S^{4,2}$ such that $\lV f_0\rV_2=1$ and $f_{t_k}\to f_0$ strongly in $L^2$.  Since $P_4^{t_k} f_{t_k}=0$, we have that $P_4^{t_0}f_0=0$.  Hence, by the assumption $t_0\in\mS$, we have that $f_0\in\mP^{t_0}$.  By the stability assumption, give $\varepsilon>0$, there is a constant $\delta_1>0$ such that for all $t$ satisfying $\lv t-t_0\rv<\delta_1$, there is a function $\psi_t\in\mP^t$ such that
\begin{equation}
\label{eqn:2.11}
\lV f_0 - \psi_t\rV_2 < \varepsilon .
\end{equation}
Hence
\begin{equation}
\label{eqn:2.12}
1 = \lV f_0\rV_2^2 = \lp f_0 - \psi_{t_k}, f_0\rp + \lp \psi_{t_k}, f_0 - f_{t_k}\rp + \lp \psi_{t_k}, f_{t_k} \rp .
\end{equation}
That $f_{t_k}\in W^{t_k}$ and $\psi_{t_k}\in\mP^{t_k}$ implies that $\lp\psi_{t_k},f_{t_k}\rp=0$.  On the other hand, \eqref{eqn:2.11} implies that $\lV\psi_{t_k}\rV_2\leq C$.  Inserting these into~\eqref{eqn:2.12} and recalling that $\lV f_0\rV_2=1$ yields
\[ 1 \leq \lV f_0 - \psi_{t_k}\rV_2 + C\lV f_{t_k} - f_0\rV_2 . \]
Letting $t_k\to t_0$ thus yields the desired contradiction.
\end{proof}

\begin{remark}
The assumption that $J^t$ varies real-analytically in $t$ is only utilized to show that $\mS$ is open.  It enters only because we use \cite[Theorem~1.7]{ChanilloChiuYang2013}.
\end{remark}

We now need to establish that $\mS$ is closed.  We begin with a preliminary lemma.

\begin{lem}
\label{lem:2.15}
Let $(M^3,J^t,\theta)$ be a family of CR structures with uniformly positive Webster scalar curvature.  Let $\{t_n\}_{n=1}^\infty$ be such that $P_4^{t_n}\geq0$ and suppose that $t_n\to t_0$.  Then
\begin{enumerate}
\item $P_4^{t_0} \geq 0$.
\item $\displaystyle\limsup_{t_n\to t_0} \dim W^{t_n} \leq \dim W^{t_0}$.
\end{enumerate}
\end{lem}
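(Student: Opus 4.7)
The plan is to treat the two claims separately, both by passing to a limit along $t_n \to t_0$. Claim (1) is a routine continuity argument, while claim (2) requires a compactness argument built on the uniform subelliptic estimate of Lemma~\ref{lem:2.1}.

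For claim (1), I would fix a smooth test function $\varphi$ and use the continuous dependence on $t$ of the coefficients of $P_4^t$ (which are polynomial expressions in the pseudohermitian torsion, Webster scalar curvature, and their covariant derivatives) to conclude $P_4^{t_n}\varphi \to P_4^{t_0}\varphi$ in $L^2$. Then $\lp P_4^{t_n}\varphi, \varphi\rp \to \lp P_4^{t_0}\varphi, \varphi\rp$, and since each term on the left is non-negative by hypothesis, the limit is non-negative. A density argument extends this to the natural $S^{4,2}$ domain.

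For claim (2), the first observation is that part (1), the uniform positivity of the Webster scalar curvature, and the main result of~\cite{ChanilloChiuYang2010} together provide a uniform positive lower bound on $\lambda_1(\Box_b^{t_n})$ for all $n \geq 0$. Thus Lemma~\ref{lem:2.1} applies uniformly along the family, and Lemma~\ref{lem:2.8} gives $k := \dim W^{t_0} < \infty$. I would argue by contradiction, assuming $\limsup \dim W^{t_n} \geq k+1$ and passing to a subsequence with $L^2$-orthonormal systems $\{f_{n,1},\dots,f_{n,k+1}\} \subset W^{t_n}$. Since $P_4^{t_n} f_{n,i} = 0$ and $f_{n,i} \perp \mP^{t_n}$, Lemma~\ref{lem:2.1} yields $\lV f_{n,i}\rV_{S^{4,2}} \leq C$ uniformly. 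Rellich compactness in Folland--Stein spaces then produces $L^2$-orthonormal limits $f_{0,1}, \dots, f_{0,k+1}$ lying in $\ker P_4^{t_0}$ by the continuity of $P_4^t$ in $t$. Their span is a $(k+1)$-dimensional subspace of $\ker P_4^{t_0} = \mP^{t_0} \oplus W^{t_0}$, so since $\dim W^{t_0} = k$, some nontrivial combination $\psi := \sum c_i f_{0,i}$ must lie in $\mP^{t_0}$ with $\psi \neq 0$. Setting $F_n := \sum c_i f_{n,i} \to \psi$ strongly in $L^2$ and using $F_n \perp \mP^{t_n}$,
\[ \lp F_n, \psi\rp = \lp F_n, \psi - \Pi^{t_n}\psi\rp, \]
where $\Pi^{t_n}$ denotes the $L^2$-projection onto $\mP^{t_n}$. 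Taking the limit forces $\lV\psi\rV_2^2 \leq C \limsup_n \lV\psi - \Pi^{t_n}\psi\rV_2$.

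The main obstacle is the closing step: showing $\lV\psi - \Pi^{t_n}\psi\rV_2 \to 0$, i.e.\ that every element of $\mP^{t_0}$ is $L^2$-approximable by elements of $\mP^{t_n}$. Under the stability hypothesis of Theorem~\ref{thm:main_thm} this is immediate, and more generally it can be extracted from the characterization $\mP^t = \ker P_3^t$, the continuous dependence of $P_3^t$ on $t$, and uniform subelliptic estimates for $P_3^t$ paralleling those for $P_4^t$. Closing this step yields $\psi = 0$ and hence the required contradiction.
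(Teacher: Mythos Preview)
Your argument is correct and follows essentially the same route as the paper: continuity for part (1), then for part (2) the uniform lower bound on $\lambda_1(\Box_b^{t_n})$ from~\cite{ChanilloChiuYang2010}, Lemma~\ref{lem:2.1} for uniform $S^{4,2}$ control, Rellich compactness to extract $L^2$-orthonormal limits in $\ker P_4^{t_0}$, and finally the stability of CR pluriharmonic functions to place those limits in $W^{t_0}$. The only cosmetic difference is that the paper fixes an arbitrary $s<N:=\limsup\dim W^{t_n}$ and produces $s$ orthonormal elements of $W^{t_0}$, whereas you argue by contradiction with $k+1$ elements; these are equivalent. You are also right to flag that the closing step genuinely requires the stability hypothesis from Theorem~\ref{thm:main_thm}: the paper's own proof invokes it explicitly (``by using the stability assumption as in the proof of Proposition~\ref{prop:2.9}''), even though it is not listed among the hypotheses of the lemma. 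Your speculative alternative via uniform estimates for $P_3^t$ is not developed in the paper and would need more work to stand on its own, so you should simply appeal to stability here as the paper does.
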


\begin{proof}

First, since $t_n\in\mS$, it holds that $\lp P_4^{t_n}\varphi,\varphi\rp\geq0$ for any smooth real-valued function $\varphi$.  The Dominated Convergence Theorem yields the first claim.

Next, as in the proof of Proposition~\ref{prop:2.9}, the main result of~\cite{ChanilloChiuYang2010} yields a constant $c>0$, independent of $t_n$ and $t_0$, such that
\[ \lambda_1\left(\Box_b^{t_n}\right)\geq c . \]
Thus, by Lemma~\ref{lem:2.1}, there is a constant $c>0$, independent of $t_n$, such that
\begin{equation}
\label{eqn:2.16}
c\lV f\rV_{S^{4,2}} \leq \left\lV P_4^{t_n}f\right\rV_2 + \lV f\rV_2
\end{equation}
for all $f\perp\mP^{t_n}$.  Set
\[ \limsup_{t_n\to t_0} \dim W^{t_n} = N , \]
where $N\in\bN\cup\{0,\infty\}$.  Let $s\in\bN$ be such that $s<N$.  By taking a subsequence if necessary, we may suppose that $\dim W^{t_n}\geq s$ for all $n$.  Let $\{f_j^{t_n}\}_{j=1}^s$ be an orthonormal (with respect to the $L^2$-metric) set of functions in $W^{t_n}$.  From~\eqref{eqn:2.16} we conclude that $\lV f_j^{t_n}\lV_{S^{4,2}}\leq c$ for some constant $c$, independent of $t_n$.  Hence the Rellich lemma implies that there are functions $f_j^0\in S^{4,2}$ such that
\[ \lV f_j^{t_n} - f_j^0\rV_2 \to 0 \]
as $t_n\to t_0$ for all $j\in\{1,2,\dotsc,s\}$.  Moreover, that $\{f_j^{t_n}\}_{j=1}^s$ is orthonormal in $W^{t_n}$ implies that $\{f_j^0\}$ are orthonormal in $\ker P^{t_0}$.  Indeed, by using the stability assumption as in the proof of Proposition~\ref{prop:2.9}, we see that $\{f_j^0\}_{j=1}^s$ are orthonormal in $W^{t_0}$.  Hence
\begin{equation}
\label{eqn:2.20}
s \leq \dim W^{t_0} .
\end{equation}
Since $s<N$ is arbitrary, this yields the desired result.
\end{proof}

Following Rumin~\cite{Rumin1994} and Garfield and Lee~\cite{GarfieldLee1998}, we now consider the (bigraded) Rumin complex.  To that end, let $E^{j,k}$ for $0\leq j+k\leq 1$ and $F^{\ell,m}$ for $2\leq \ell+m\leq 3$ be the vector bundles
\begin{align*}
E^{0,0} & = \lp 1 \rp , & E^{1,0} & = \lp \theta^1\rp , \\
E^{0,1} & = \lp \theta^{\bar 1} \rp, & F^{2,0} & = \lp \theta^1\wedge\theta \rp, \\
F^{1,1} & = \lp \theta^{\bar 1}\wedge\theta \rp, & F^{2,1} & = \lp \theta\wedge d\theta \rp ,
\end{align*}
where $\lp\theta^1\rp$ denotes the span of $\theta^1$ as a $C^\infty(M;\bC)$-module.  The Rumin complex is the bigraded complex
\begin{equation}
\label{eqn:garfield_lee}
\xymatrix{
& E^{1,0} \ar[r]^{D^{\prime}} \ar[rdd]_<<<<{D^{\prime\prime}} & F^{2,0} \ar[rd]^{d^{\prime\prime}} & \\
E^{0,0} \ar[ru]^{d^\prime} \ar[rd]_{d^{\prime\prime}} & & & F^{2,1} \\
& E^{0,1} \ar[ruu]^<<<<<<{D^+} \ar[r]_{D^\prime} & F^{1,1} \ar[ru]_{d^{\prime}} & .
}
\end{equation}
where $d^\prime=\partial_b$, $d^{\prime\prime}=\bar\partial_b$, and the operators $D^\prime$, $D^{\prime\prime}$, and $D^+$ are the second order operators
\begin{subequations}
\label{eqn:Rumin}
\begin{align}
\label{eqn:Rumin:10-20} D^\prime(\sigma_1\theta^1) & = \left(-i\sigma_{1,\bar 11} - \sigma_{1,0}\right)\theta^1\wedge\theta , \\
\label{eqn:Rumin:10-11} D^{\prime\prime}(\sigma_1\theta^1) & = \left(-i\sigma_{1,\bar1\bar1} - A_{\bar1\bar1}\sigma_1\right)\theta^{\bar 1}\wedge\theta , \\
\label{eqn:Rumin:01-11} D^\prime(\sigma_{\bar 1}\theta^{\bar 1}) & = \left(i\sigma_{\bar 1,1\bar1} - \sigma_{\bar 1,0}\right)\theta^{\bar 1}\wedge\theta , \\
\label{eqn:Rumin:01-20} D^+(\sigma_{\bar 1}\theta^{\bar 1}) & = \left(i\sigma_{\bar 1,11} - A_{11}\sigma_{\bar 1}\right)\theta^1\wedge\theta .
\end{align}
\end{subequations}
\eqref{eqn:garfield_lee} is a bigraded complex in the sense that sums of compositions with the same domain and codomain vanish.  For example, $D^\prime d^\prime + D^+ d^{\prime\prime} = 0$.

We will use the Rumin complex to obtain a new characterization of the CR pluriharmonic functions.  First recall that $\mP = \ker P_3$ for $P_3$ as in~\eqref{eqn:P3}.  We then rewrite $P_3$ in terms of the operators appearing in~\eqref{eqn:garfield_lee}.

\begin{lem}
\label{lem:2.22}
Let $(M^3,J,\theta)$ be a pseudohermitian manifold.  Identify $E^{1,0}\cong F^{2,0}$ via $\theta^1\cong\theta^1\wedge\theta$ and identify $E^{0,0}\cong F^{2,1}$ via $1\cong\theta\wedge d\theta$.  Then
\begin{align*}
P_3 & = -iD^+d^{\prime\prime} , \\
P_4 & = -id^{\prime\prime}D^+d^{\prime\prime} .
\end{align*}
\end{lem}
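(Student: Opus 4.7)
The plan is to verify both identities by direct computation, unwinding each composition of operators using the explicit formulas in \eqref{eqn:Rumin:10-20}--\eqref{eqn:Rumin:01-20} together with the definition of $\bar\partial_b$, and then comparing the result with \eqref{eqn:P3} and \eqref{eqn:P4_defn}.

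For the first identity, I would compute $d''\varphi = \bar\partial_b\varphi = \varphi_{\bar 1}\theta^{\bar 1}$ and then apply $D^+$ via \eqref{eqn:Rumin:01-20}, obtaining
\[ D^+ d''\varphi = (i\varphi_{\bar 1, 11} - A_{11}\varphi_{\bar 1})\,\theta^1\wedge\theta \in F^{2,0}. \]
Multiplying by $-i$ and using the identification $\theta^1\wedge\theta\cong\theta^1$ yields $(\varphi_{\bar 1, 11} + iA_{11}\varphi_{\bar 1})\theta^1$, which matches $P_3\varphi$ upon noting that $\varphi^1 = \varphi_{\bar 1}$ and $\varphi_{\bar 1}{}^{\bar 1}{}_1 = \varphi_{\bar 1, 11}$ under the normalization $h_{1\bar 1} = 1$.

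For the second identity, I would exploit the relation $P_4 = \delta_b P_3$ and show that, under the stated identifications, $d''\colon F^{2,0}\to F^{2,1}$ reproduces $\delta_b\colon E^{1,0}\to E^{0,0}$. For $\omega = \sigma_1\theta^1\in E^{1,0}$ one has $\delta_b\omega = \sigma_{1,\bar 1}$ (again using $h_{1\bar 1}=1$). On the Rumin side, compute $d(\sigma_1\theta^1\wedge\theta)$, which coincides with $d''(\sigma_1\theta^1\wedge\theta)$ since $\Omega^3 = F^{2,1}$, by expanding via the Leibniz rule, the structure equations $d\theta = i\theta^1\wedge\theta^{\bar 1}$ and $d\theta^1 = \theta^1\wedge\theta_1{}^1 + \theta\wedge\tau^1$, the decomposition of $\theta_1{}^1$ in the admissible coframe $\{\theta,\theta^1,\theta^{\bar 1}\}$, and the covariant-derivative identity $\sigma_{1,\bar 1} = Z_{\bar 1}(\sigma_1) - \theta_1{}^1(Z_{\bar 1})\sigma_1$. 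The contributions from $d\sigma_1\wedge\theta^1\wedge\theta$ and from $\sigma_1 d\theta^1\wedge\theta$ combine via $\theta^{\bar 1}\wedge\theta^1\wedge\theta = i\theta\wedge d\theta$ into a 3-form proportional to $\sigma_{1,\bar 1}\theta\wedge d\theta$. Composing with the first identity and applying the identification $1\cong\theta\wedge d\theta$ then yields $-id''D^+d''\varphi = \delta_b P_3\varphi = P_4\varphi$.

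The only real obstacle is the bookkeeping of factors of $i$ and signs: the wedge relation $\theta^{\bar 1}\wedge\theta^1\wedge\theta = i\theta\wedge d\theta$, the connection-form contributions coming from $\theta_1{}^1$, and the identifications $\theta^1\cong\theta^1\wedge\theta$ and $1\cong\theta\wedge d\theta$ must all be tracked consistently so that the $i$-factors cancel in the right way. Once these conventions are fixed, the argument reduces to a mechanical calculation.
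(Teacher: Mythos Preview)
Your proposal is correct and follows exactly the paper's approach: compute $-iD^+d^{\prime\prime}$ directly from \eqref{eqn:Rumin:01-20} to obtain $P_3$, and then deduce the second identity from $P_4=\delta_bP_3$. The paper's own proof of the second claim is a single sentence that leaves implicit the verification that $d^{\prime\prime}\colon F^{2,0}\to F^{2,1}$ corresponds to $\delta_b$ under the stated identifications, so your expanded computation of $d(\sigma_1\theta^1\wedge\theta)$ via the structure equations simply fills in a detail the paper omits.
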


\begin{proof}

From~\eqref{eqn:Rumin:01-20} we compute that
\[ -iD^+d^{\prime\prime} = f_{\bar111} + iA_{11}f_{\bar1}, \]
establishing the first claim.  This and the definition $P_4=\delta_bP_3$ yields the second claim.
\end{proof}

Our goal is to give an alternative description of the finite-dimensional supplementary $W^t$ in terms of the range of $P_3^t$ in~\eqref{eqn:garfield_lee}.

\begin{lem}
\label{lem:2.23}
Let $(M^3,J,\theta)$ be a pseudohermitian manifold and let $W$ be the supplementary space.  Then
\[ W \cong \ker d^{\prime\prime} \cap \im D^+d^{\prime\prime} \subset F^{2,0} . \]
Thus
\[ W \cong \ker d^{\prime\prime} \cap \im P_3 \subset F^{2,0} . \]
\end{lem}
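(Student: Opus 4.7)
The plan is to use Lemma~\ref{lem:2.22} to relate $P_4$ to a composition involving $P_3$, and then extract $W$ as the quotient $\ker P_4 / \mP$ via the first isomorphism theorem applied to $P_3$.

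First I would observe that, under the identifications of Lemma~\ref{lem:2.22}, one has $P_4 = d^{\prime\prime}\circ P_3$, since $P_4 = -id^{\prime\prime}D^+d^{\prime\prime}$ and $P_3 = -iD^+ d^{\prime\prime}$. Consequently
\[ \ker P_4 = \{\varphi : P_3\varphi\in\ker d^{\prime\prime}\} = P_3^{-1}\bigl(\ker d^{\prime\prime}\bigr) . \]
Restrict $P_3$ to $\ker P_4$ to obtain a linear map
\[ P_3\colon \ker P_4 \longrightarrow \ker d^{\prime\prime}\cap F^{2,0} . \]
Its image is exactly $\ker d^{\prime\prime}\cap\im P_3$: any $\alpha = P_3\varphi\in\ker d^{\prime\prime}$ automatically satisfies $d^{\prime\prime}P_3\varphi=0$, i.e.\ $\varphi\in\ker P_4$, so the restriction is surjective onto $\ker d^{\prime\prime}\cap\im P_3$. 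Its kernel is $\ker P_3\cap\ker P_4 = \mP$, because $\mP\subset\ker P_4$.

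By the first isomorphism theorem for vector spaces, this yields
\[ \ker P_4 \big/ \mP \;\cong\; \ker d^{\prime\prime}\cap\im P_3 . \]
On the other hand, Lemma~\ref{lem:2.8} gives the orthogonal decomposition $\ker P_4 = \mP\oplus W$ in $S^{2,2}$, so the natural projection $\ker P_4\to\ker P_4/\mP$ restricts to an isomorphism $W \cong \ker P_4/\mP$. Composing, one obtains the isomorphism $W\cong \ker d^{\prime\prime}\cap\im P_3$. The first displayed isomorphism then follows by noting that $P_3 = -iD^+d^{\prime\prime}$ differs from $D^+d^{\prime\prime}$ by a nonzero scalar, so $\im P_3 = \im D^+d^{\prime\prime}$.

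There is essentially no obstacle: the argument is purely algebraic once Lemma~\ref{lem:2.22} and Lemma~\ref{lem:2.8} are in hand. The only point requiring mild care is the regularity of elements in $W\subset S^{2,2}$ so that $P_3$ and $d^{\prime\prime}$ genuinely act on them and the identification $W\cong \ker P_4/\mP$ is consistent with the Folland--Stein topology, but this is straightforward since $W$ is finite-dimensional (Lemma~\ref{lem:2.8}) and thus consists of smooth functions by the subelliptic estimate~\eqref{eqn:lem:2.1}.
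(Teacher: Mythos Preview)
Your proof is correct and is essentially the same as the paper's. The paper defines the explicit map $\Phi\colon W\to\ker d^{\prime\prime}\cap\im P_3$ by $\Phi(f)=P_3(f)$ and checks directly that it is well-defined, surjective (using the decomposition $h=\phi+\psi$ with $\phi\in\mP$, $\psi\in W$), and injective (since $\mP\cap W=\{0\}$); your argument factors this same map through the quotient $\ker P_4/\mP$ via the first isomorphism theorem, but the underlying isomorphism is identical.
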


\begin{proof}

Let $\Phi\colon W\to\ker d^{\prime\prime}\cap\im P_3\subset F^{2,0}$ be the map $\Phi(f)=P_3(f)$.  Note that $d^{\prime\prime}P_3f=P_4f=0$ since $f\in W$, and so $\Phi$ is well-defined.

To see that $\Phi$ is surjective, let $u\in\ker d^{\prime\prime}\cap\im P_3$.  We may thus find a function $h\in E^{0,0}$ such that $u=P_3h$.  By the assumption on $u$, we have that $h\in\ker P_4$, and hence we may write $h=\phi+\psi$ for $\phi\in\mP$ and $\psi\in W$.  Since $\phi\in\ker P_3$, we have that
\[ u = P_3h = P_3\psi . \]
Hence $\Phi$ is surjective.

To see that $\Phi$ is injective, assume $P_3w_1=P_3w_2$ for $w_1,w_2\in W$.  Then $P_3(w_1-w_2)=0$.  Thus $w_1-w_2\in\mP\cap W$, whence $w_1=w_2$.
\end{proof}

Now set
\[ F^t := \ker \bar\partial_b^t \cap \im P_3^t \subset F^{2,0} . \]
By Lemma~\ref{lem:2.23} we have that
\begin{equation}
\label{eqn:2.24}
\dim F^t = \dim W^t .
\end{equation}
We then have the following lemma.

\begin{lem}
\label{lem:2.26}
Let $(M^3,J^t,\theta)$ be a family of embedded CR structures such that $J^t$ is $C^6$ in the deformation parameter $t$ and the Szeg{\H o} projector $S^t\colon F^{2,0}\to\ker d^{\prime\prime}$ is continuous in the deformation parameter $t$.  Then $\dim F^t$ is a lower semi-continuous function of the deformation variable $t$.
\end{lem}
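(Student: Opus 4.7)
To establish lower semi-continuity at $t_0$, the goal is to show that $\dim F^t \geq k := \dim F^{t_0}$ for $t$ sufficiently close to $t_0$. Using Lemma~\ref{lem:2.23}, pick $h_1,\dotsc,h_k\in W^{t_0}$ so that $\{u_i := P_3^{t_0}h_i\}_{i=1}^{k}$ forms a basis of $F^{t_0}$. The plan is to construct, for each $t$ near $t_0$, functions $g_i^t$ close to $h_i$ with $g_i^t \in \ker P_4^t$, so that the elements $P_3^t g_i^t$ lie in $F^t$ by Lemma~\ref{lem:2.23}; linear independence will then follow by continuity. The naive candidate $\tilde u_i^t := S^t(P_3^t h_i)$ sits in $\ker d^{\prime\prime,t}$ and converges to $u_i$ in $L^2$ by continuity of $P_3^t$ and $S^t$, but it need not lie in $\im P_3^t$; we therefore pass to $g_i^t$ instead.

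To produce $g_i^t$, observe that $P_4^t$ depends continuously on $t$ (since $J^t \in C^6$ and the coefficients of $P_4^t$ involve at most second derivatives of the CR structure), hence $P_4^t h_i \to P_4^{t_0} h_i = 0$ in $L^2$. Self-adjointness of $P_4^t$ gives $P_4^t h_i \perp \ker P_4^t$, and granting the standard closed-range property for $P_4^t$ on the compact manifold $M$, one may solve $P_4^t \eta_i^t = -P_4^t h_i$ uniquely with $\eta_i^t \in (\ker P_4^t)^\perp$. Setting $g_i^t := h_i + \eta_i^t$ then gives $g_i^t \in \ker P_4^t$. Provided we verify that $\eta_i^t \to 0$ in $L^2$, continuity of $P_3^t$ yields $P_3^t g_i^t \to u_i$ in $L^2$, and linear independence of $\{u_i\}$ propagates to that of $\{P_3^t g_i^t\}$ for $t$ near $t_0$, giving $\dim F^t \geq k$.

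The main obstacle is to control $\eta_i^t$: one needs a uniform lower bound on the smallest positive eigenvalue of $P_4^t$ as $t$ varies near $t_0$ (equivalently, a uniform spectral gap at $0$). The continuity hypothesis on the Szeg\H{o} projector $S^t$, which is the $L^2$-projection onto $\ker d^{\prime\prime,t} \subset L^2(F^{2,0})$, is precisely what one should leverage here: it encodes continuity of $\ker d^{\prime\prime,t}$ in the gap topology, which via the factorization $P_4^t = -i\,d^{\prime\prime,t} D^{+,t} d^{\prime\prime,t}$ from Lemma~\ref{lem:2.22} should transfer to spectral control on $P_4^t$, using subelliptic machinery in the spirit of Lemma~\ref{lem:2.1} applied to $D^{+,t}$. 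Granting such a uniform gap gives $\lV \eta_i^t \rV_2 \leq C \lV P_4^t h_i \rV_2$ with $C$ independent of $t$, and so $\eta_i^t \to 0$ in $L^2$, completing the construction.
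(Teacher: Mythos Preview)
Your approach has a genuine gap that you yourself flag: the uniform spectral gap for $P_4^t$ near $t_0$ is assumed (``granting such a uniform gap'') rather than proved, and the hand-wave that continuity of $S^t$ ``should transfer to spectral control on $P_4^t$'' via Lemma~\ref{lem:2.1}-type estimates is not an argument. In fact Lemma~\ref{lem:2.1} requires the uniform bound $\lambda_1(\Box_b^t)\geq c>0$, which is \emph{not} among the hypotheses of Lemma~\ref{lem:2.26}; the lemma assumes only embeddedness, $C^6$ dependence, and continuity of $S^t$. So your route needs strictly more than is given. There is a second gap as well: from $\eta_i^t\to 0$ in $L^2$ you cannot conclude $P_3^t\eta_i^t\to 0$ in $L^2$, since $P_3^t$ is third order; you would need $\eta_i^t\to 0$ in $S^{3,2}$, which again drags in uniform subelliptic estimates not available under the stated hypotheses.

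The paper avoids all of this with a much lighter idea. Set $A^t := S^t\circ P_3^t$; this is a finite-rank operator with $\rank A^t=\dim F^t$ (via Lemma~\ref{lem:2.8} and~\eqref{eqn:2.24}). Because $S^t$ is continuous in $t$ by hypothesis and $P_3^t$ has coefficients depending continuously on $t$ (this is where the $C^6$ assumption enters), the pairings $h_{ij}(t)=\langle A^t\phi_i,\psi_j\rangle$ are continuous for fixed test data $\phi_i,\psi_j$. Lower semi-continuity of $\rank A^t$ is then the elementary fact that a nonvanishing $s\times s$ minor $\det(h_{ij}(t_0))\neq 0$ persists for $t$ near $t_0$. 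No spectral theory, no closed-range arguments, no inversion of $P_4^t$ are needed. The moral: rather than perturbing elements of $\ker P_4^t$, perturb the \emph{operator} $S^t\circ P_3^t$ whose range realizes $F^t$, and use that rank is lower semi-continuous for continuously varying finite-rank families.
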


\begin{proof}

Let $S^t$ denote the Szeg{\H o} projector in $F^{2,0}$.  Consider the linear operator
\[ A^t := S^t\circ P_3^t . \]
As a consequence of Lemma~\ref{lem:2.8} and~\eqref{eqn:2.24}, we have that $\rank A^t = \dim F^t < \infty$, and hence the range of $A^t$ is finite-dimensional.  By hypothesis, $S^t$ and $P_3^t$ are continuous in $t$.  We conclude that for fixed sections $\phi\in C^3(M;\bC)$ and $\psi\in L^2\left(M;(F^{2,0})^\ast\right)$, the function
\[ h(t) := \left\lp A^t(\varphi),\psi \right\rp \]
is continuous.  We now show that $\rank A^t$ is lower semi-continuous.  To prove this fact, it suffices to show that
\[ G = \left\{ t\in[-1,1] \colon \rank A^t > a \right\} \]
is an open set for any $a$.  Fix $a$ and let $t_0\in G$.  Set $s=\rank A^{t_0}$, so that $a<s<\infty$.  Thus we can find functions $\{\phi_i\}_{i=1}^s$ and functionals $\{\psi_i\}_{i=1}^s$ so that the $s\times s$-matrix $\left(h_{ij}(t_0)\right)$ with entries
\[ h_{ij}(t_0) = \left\lp A^{t_0}\phi_i,\psi_j\right\rp \]
satisfies
\[ \det \left(h_{ij}(t_0)\right) \not= 0 . \]
Since $h(t)$ is continuous, there is a constant $\delta>0$ such that for any $t$ with $\lv t-t_0\rv<\delta$ it holds that
\[ \det\left(h_{ij}(t)\right) \not= 0 . \]
Hence $\rank A^t\geq s>a$; i.e.\ $G$ is open.
\end{proof}

\begin{cor}
\label{cor:2.27}
Let $(M^3,J^t,\theta)$ be as in Lemma~\ref{lem:2.26}.  Then $\dim W^t$ is a lower semi-continuous function of $t$.
\end{cor}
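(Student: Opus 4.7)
The plan is essentially to chain together two facts already in hand. By Lemma~\ref{lem:2.23}, the supplementary space $W^t$ is isomorphic to $F^t = \ker\bar\partial_b^t \cap \im P_3^t \subset F^{2,0}$, and this isomorphism is recorded in~\eqref{eqn:2.24} as $\dim F^t = \dim W^t$. Therefore the lower semi-continuity of $\dim W^t$ reduces immediately to the lower semi-continuity of $\dim F^t$, which is precisely the content of Lemma~\ref{lem:2.26}.

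So the proof is a one-line deduction: for every $t_0 \in [-1,1]$ and every $a < \dim W^{t_0} = \dim F^{t_0}$, Lemma~\ref{lem:2.26} produces a neighborhood of $t_0$ on which $\dim F^t > a$, hence $\dim W^t > a$. There is no real obstacle here, since both ingredients have already been established under exactly the hypotheses of Lemma~\ref{lem:2.26}, which are carried over to the corollary. The only subtlety worth flagging is to confirm that the identification $W^t \cong F^t$ from Lemma~\ref{lem:2.23} is valid for every $t$ in the family (not just pointwise under extra hypotheses), but this is clear since Lemma~\ref{lem:2.23} is stated for an arbitrary pseudohermitian manifold and uses only the definitions of $W$ and $P_3$.
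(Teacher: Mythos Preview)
Your proposal is correct and matches the paper's own proof essentially verbatim: the paper simply invokes~\eqref{eqn:2.24} to identify $\dim W^t=\dim F^t$ and then cites Lemma~\ref{lem:2.26}. Your added remark that Lemma~\ref{lem:2.23} applies to each $t$ in the family is a harmless clarification.
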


\begin{proof}

By~\eqref{eqn:2.24} we have that $\dim W^t=\dim F^t$.  The result then follows from Lemma~\ref{lem:2.26}.
\end{proof}

\begin{prop}
\label{prop:2.29}
Let $(M^3,J^t,\theta)$ be a family of embedded CR manifolds satisfying hypotheses (2) and (3) of Theorem~\ref{thm:main_thm}.  Suppose also that $J^t$ is $C^6$ in the deformation parameter $t$.  Define $\mS$ by~\eqref{eqn:S}.  Then $\mS$ is closed.
\end{prop}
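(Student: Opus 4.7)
The plan is to run a direct compactness argument: take an arbitrary sequence $\{t_n\}_{n=1}^\infty\subset\mS$ with $t_n\to t_0\in[-1,1]$, and show $t_0\in\mS$ by verifying the two defining conditions of $\mS$ separately.

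\textbf{Step 1: $P_4^{t_0}\geq 0$.} Since $J^t$ is $C^6$ in $t$, the coefficients of $P_4^t$ depend continuously on $t$, so for any fixed smooth $\varphi$ we have $\lp P_4^{t_n}\varphi,\varphi\rp\to\lp P_4^{t_0}\varphi,\varphi\rp$ by the Dominated Convergence Theorem. Each term in the sequence is nonnegative because $t_n\in\mS$, so the limit is nonnegative, proving $P_4^{t_0}\geq 0$. (This is precisely the first conclusion of Lemma~\ref{lem:2.15}, whose proof does not use the uniform Webster scalar curvature bound.)

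\textbf{Step 2: $\ker P_4^{t_0}=\mP^{t_0}$.} Equivalently, $W^{t_0}=\{0\}$. Since each $t_n\in\mS$, we have $W^{t_n}=\{0\}$ and hence $\dim W^{t_n}=0$ for all $n$. Because hypothesis (2) holds and $J^t$ is $C^6$ in $t$, the hypotheses of Lemma~\ref{lem:2.26} are satisfied, and Corollary~\ref{cor:2.27} applies: the function $t\mapsto\dim W^t$ is lower semi-continuous. Applied at $t_0$, this gives
\[ \dim W^{t_0}\leq\liminf_{n\to\infty}\dim W^{t_n}=0,\]
so $W^{t_0}=\{0\}$ and consequently $\ker P_4^{t_0}=\mP^{t_0}$.

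Combining Steps 1 and 2 we conclude $t_0\in\mS$, so $\mS$ is closed. The only substantive input, beyond continuous dependence of $P_4^t$ on $t$, is the lower semi-continuity of $\dim W^t$, and that is the step where the real work lies; however, it has already been carried out in Lemmas~\ref{lem:2.23} and~\ref{lem:2.26} by re-expressing $W^t$ via the Rumin complex as $\ker\bar\partial_b^t\cap\im P_3^t$ and exploiting the assumed continuity of the Szeg\H{o} projectors. So the proof of the proposition itself is essentially immediate once these earlier results are in hand.
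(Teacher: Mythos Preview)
Your proof is correct and essentially identical to the paper's own argument: both take a convergent sequence $t_n\to t_0$ in $\mS$, invoke Corollary~\ref{cor:2.27} to get $\dim W^{t_0}\leq\liminf\dim W^{t_n}=0$, and conclude $t_0\in\mS$. You are simply more explicit about the nonnegativity step $P_4^{t_0}\geq 0$, which the paper dispatches with ``it follows easily.''
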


\begin{proof}

Let $t_n\in\mS$ with $t_n\to t_0$.  From Corollary~\ref{cor:2.27} we have that
\[ \dim W^{t_0} \leq \liminf_{t_n\to t_0} \dim W^{t_n} . \]
Since $t_n\in\mS$, we have that $\dim W^{t_n}=0$ for all $n$.  Thus $\dim W^{t_0}=0$; i.e.\ $\ker P_4^{t_0}=\mP^{t_0}$.  It follows easily that $t_0\in\mS$.
\end{proof}

Proposition~\ref{prop:2.9} and Proposition~\ref{prop:2.29} together prove Theorem~\ref{thm:main_thm}.
\section{The proof of Corollary~\ref{cor:convex}}
\label{sec:cor}

As discussed in the introduction, a calculation from~\cite{ChanilloChiuYang2013} and stability results from~\cite{ChanilloChiuYang2010,Lempert1994} show that real ellipsoids in $\bC^2$, viewed as deformations of the standard CR three-sphere, satisfy conditions (1), (3), (4) and (5) of Theorem~\ref{thm:main_thm}.  The continuity of the Szeg{\H o} projectors $S^t\colon F^{2,0}\to\ker\bar\partial_b$ for this family follows from results of Kerzman and Stein~\cite{KerzmanStein1978}, as we now explain.

Let $\Omega\subset\bC^2$ be a strictly pseudoconvex domain with boundary $M=\partial\Omega$ and let $H\colon M\times M\to\bC$ be the Henkin--Ramirez kernel.  Define the operator ${\bf H}\colon L^2(M)\to L^2(M)\cap\ker\bar\partial_b$ by
\[ {\bf H}u(w) = \int_M H(w,z)u(z)\, d\sigma_z . \]
Let ${\bf A}={\bf H}^\ast - {\bf H}$.  From the reproducing properties of ${\bf H}$ and the Szeg{\H o} projection $S$, Kerzman and Stein observe that the Szeg{\H o} projection $S$ can be written
\[ S = {\bf H}\left(\mathrm{I}-{\bf A}\right)^{-1} ; \]
see~\cite[(3.4.7)]{KerzmanStein1978}.  Thus the continuity of the Szeg{\H o} projector follows from the continuity of the Henkin--Ramirez kernel.  Finally, \cite[Theorem~1.3.1]{KerzmanStein1978} implies that the Henkin--Ramirez kernel is continuous for families of real ellipsoids in $\bC^2$.

Suppose now that $\omega$ is an $L^2$ section of $F^{2,0}$.  Since $\Omega\subset\bC^2$, we can choose a global frame $Z_1$, and thus consider $f=\iota_{Z_1}i_T\omega\in L^2$.  That is, $\omega=f\theta^1\wedge\theta$.  Then $S(f)\in\ker\bar\partial_b$.  From the structure relations $d\theta=ih_{1\bar 1}\theta^1\wedge\theta^{\bar 1}$ and $d\theta^1=\theta^1\wedge\omega_1{}^1 - A_{\bar 1}{}^1\theta^{\bar1}\wedge\theta$, we observe that
\[ S(\omega) := S(f)\theta^1\wedge\theta \]
is $\bar\partial_b$-closed.  It follows readily that $S\colon F^{2,0}\to\ker\bar\partial_b$ is an orthogonal projection; i.e.\ $S$ is the Szeg{\H o} projector of Theorem~\ref{thm:main_thm}.  It then follows from the previous paragraph that the real ellipsoids also satisfy condition (2) of Theorem~\ref{thm:main_thm}, and hence the conclusion of Corollary~\ref{cor:convex} holds.

\bibliographystyle{abbrv}
\bibliography{../bib}
\end{document}